\newcommand{\chapeau}{{\rlap{\smash{\hbox{\lower4pt\hbox{\hskip1pt$\widehat{\phantom{u}}$}}}}}\mbox{ }}
\DeclareSymbolFont{cyrletters}{OT2}{wncyr}{m}{n}
\DeclareMathSymbol{\sha}{\mathalpha}{cyrletters}{"58}
 \newtheorem{thm}{Theorem}[section]
 \newtheorem*{pb*}{\textit{Question}}
 \newtheorem*{cor*}{\textit{Corollary}}
 \newtheorem{lem}[thm]{Lemma}
 \newtheorem{prop}[thm]{Proposition}
 \theoremstyle{definition}
 \theoremstyle{remark}
 \theoremstyle{remark}
 \numberwithin{equation}{subsection}
 \newcommand{\To}{\longrightarrow}
 \newcommand{\coker}{\textup{Coker}}
 \renewcommand{\ker}{\textup{Ker}}
 \newcommand{\Br}{\textup{Br}}
 \newcommand{\CH}{\textup{CH}}
 \newcommand{\Q}{\mathbb{Q}}
 \newcommand{\Z}{\mathbb{Z}}
\begin{document}

\title[]
{Local-global principle for certain biquadratic normic bundles}

\author{Yang CAO}
\author{Yongqi LIANG}

\address{Yang CAO \newline School of Mathematical Sciences, \newline Capital Normal University,
\newline 105 Xisanhuanbeilu, \newline 100048 Beijing, China}

\email{yangcao1988@gmail.com}

\address{Yongqi LIANG  \newline Institut de Mathématiques de Jussieu, \newline Université Paris Diderot - Paris VII, \newline 175 rue du Chevaleret,\newline  75013 Paris Cedex 13, France}

\email{liangy@math.jussieu.fr}

\thanks{\textit{Key words} : zero-cycles , Hasse principle, weak approximation,
Brauer\textendash Manin obstruction, normic equations}

\thanks{\textit{MSC 2010} : 11G35 (14G25, 14G05, 14D10, 14C25)}

\date{\today.}

%\dedicatory{}

%%% ----------------------------------------------------------------------

%%% ----------------------------------------------------------------------
\maketitle

\begin{abstract}
Let $X$ be a proper smooth variety having an affine open subset defined by the normic equation
$N_{k(\sqrt{a},\sqrt{b})/k}({\textbf{x}})=Q(t_{1},\ldots,t_{m})^{2}$ over a number field $k$.
We prove that : (1) the failure of the local-global principle for zero-cycles is controlled by the Brauer group of $X;$
(2) the analogue for rational points is also valid assuming Schinzel's hypothesis.
\end{abstract}

\tableofcontents
%%% ----------------------------------------------------------------------

\section{Introduction}

Let $k$ be a number field and $\Omega$ be the set of places of $k.$
We will discuss the local-global principle for rational points and for 0-cycles on algebraic varieties
$X$ supposed proper smooth and geometrically integral over $k.$ We write simply $X_{v}=X\times_{k}k_{v}$
for all $v\in\Omega$ and we denote by $\Br(X)$ the cohomological Brauer group of $X.$
For any abelian group $M,$ denote $\coker(M\buildrel{n}\over\to M)$ by $M/n.$

The so-called Brauer\textendash Manin obstruction to the local-global principle for rational points on $X$ is defined by Manin in 1970's
using the Brauer group $\Br(X),$ and
it is conjectured to be the only obstruction for geometrically rational varieties (or even a larger family of varieties) \cite{CTSansuc77-3}.
The local-global principle for 0-cycles is obstructed similarly.
After some reformulations, the following sequence $(E)$ is conjectured to be exact for all proper smooth
varieties
$$\varprojlim_n\CH_0(X)/n\to\prod_{v\in\Omega}\varprojlim_n\CH'_0(X_v)/n\to Hom(\Br(X),\mathbb{Q}/\mathbb{Z}),\leqno (E)$$
which means that the Brauer group gives the only obstruction to the local-global principle
for 0-cycles, \textit{cf.} \cite{CTSansuc81}, \cite{KatoSaito86}, \cite{CT95}
for more information about the conjecture, and \cite{Wittenberg} for more details on the sequence.
Instead of giving a very long list of references of contributions to this longtime focused question, we mention some recent papers
in which more historical details is presented :  \cite{B-HB} for rational points, \cite{Wittenberg} for 0-cycles, and \cite{Peyre} for older results.

In this paper, we restrict ourselves to a very concrete situation.
Let $K/k$ be a finite extension of degree $n$ and $P(t_{1},\ldots,t_{m})\in k[t_{1},\ldots,t_{m}]$ be a polynomial, the equation
$$N_{K/k}({\textbf{x}})=P(t_{1},\ldots,t_{m})$$
defines in $R_{K/k}\mathbb{A}^{1}\times\mathbb{A}^{m}=\mathbb{A}^{n+m}$ a closed subvariety fibered over $\mathbb{A}^{m}$ via
the parametric variables $t_{1},\ldots,t_{m}.$
We consider proper smooth models of such varieties.

\textit{\textbf{Question.}}
\textit{Is the Brauer\textendash Manin obstruction the only obstruction to the local-global principle for rational points and for 0-cycles
on this family of varieties?}

Very little is known when $K/k$
is a biquadratic Galois extension --- a ``simple'' case particularly mentioned in \cite[Rem. 1.5]{CT-Sk-SD}.
In such a case, theoretical difficulties come from not only the degeneracy of the fibers but also the non-triviality of Brauer groups of
the fibers.
%A recent preprint of Wei solved the case where  $k[t]/(P(t))$ is a field containing a quadratic subfield of $K,$
%\cite[Cor. 3.4]{Wei}.
We state the main result of the present paper as follows.

\begin{thm}\label{mainThm}
Let $k$ be a number field and $Q(t_{1},\ldots,t_{m})\in k(t_{1},\ldots,t_{m})$ be a non-zero rational function. Let $X$ be an arbitrary proper smooth model of the variety
defined by the equation
$$N_{K/k}({\textbf{x}})=Q(t_{1},\ldots,t_{m})^{2},$$
where $K/k$ is a biquardratic extension \textit{i.e.} a Galois extension of Galois group $\mathbb{Z}/2\mathbb{Z}\oplus\mathbb{Z}/2\mathbb{Z}.$
Then
\begin{itemize}
\item[(1)] the sequence $(E)$ is exact for $X;$
\item[(2)] assuming Schinzel's hypothesis, the Brauer\textendash Manin obstruction is the only obstruction to the Hasse principle
and to weak approximation for rational points on $X.$
\end{itemize}
\end{thm}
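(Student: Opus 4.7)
The natural approach is the fibration method, split into three stages: analysis of the smooth fibers of the projection to the parameter space, control of $\Br(X)$, and application of a global fibration theorem.

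I would begin by choosing a proper smooth model $X$ together with a morphism $\pi\colon X\to\mathbb{P}^{m}$ extending the natural projection $\mathbb{A}^{n+m}\to\mathbb{A}^{m}$ to the parametric coordinates. The generic fiber $X_{\eta}$ is defined over $F=k(t_{1},\ldots,t_{m})$ by $N_{K_{F}/F}(\mathbf{x})=Q^{2}$, and for almost every closed point $\theta\in\mathbb{A}^{m}$ with $Q(\theta)\in k(\theta)^{*}$ the fiber $X_{\theta}$ is a smooth compactification of the normic variety $N_{K_{\theta}/k(\theta)}(\mathbf{x})=Q(\theta)^{2}$. Such a smooth fiber is a principal homogeneous space under the norm-one torus $R^{(1)}_{K_{\theta}/k(\theta)}\mathbb{G}_{m}$, up to a $k(\theta)$-rational translation, and Sansuc's theorem ensures that the Brauer--Manin obstruction is the only obstruction to the Hasse principle and weak approximation on each such fiber. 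This supplies the fiberwise arithmetic input needed by any fibration theorem.

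Next, I would describe $\Br(X)$ modulo the image of $\Br(k)$ explicitly. Writing $K=k(\sqrt{a},\sqrt{b})$ makes the three intermediate quadratic subfields $k(\sqrt{a})$, $k(\sqrt{b})$, $k(\sqrt{ab})$ visible; each yields quaternion algebras of the form $(a,f_{1})$, $(b,f_{2})$, $(ab,f_{3})$ on $X$, where the $f_{i}$ are rational functions constructed from $Q$ and from partial norms of $\mathbf{x}$ to the quadratic subfields. The decisive structural fact is that the hypothesis $N_{K/k}(\mathbf{x})=Q^{2}$, with the right-hand side a perfect square, forces the ``exotic'' non-cyclic biquadratic Brauer class on $X$ to be pulled back from the base, so that the vertical part of $\Br(X)$ is generated, modulo constants, by finitely many cup products of quadratic characters with explicit rational functions on $\mathbb{P}^{m}$.

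Finally, I would invoke a global fibration theorem. For part~(1), I would apply the fibration theorem for $0$-cycles of Harpaz--Wittenberg (or Liang's earlier version), whose hypothesis on smooth fibers over closed points of the base reduces, via Step~1, to exactness of $(E)$ for principal homogeneous spaces under connected linear algebraic groups over a number field, which is classical. For part~(2), I would apply the Colliot-Th\'el\`ene--Sansuc--Swinnerton-Dyer fibration method under Schinzel's hypothesis (in the forms developed by Harari and by Colliot-Th\'el\`ene--Skorobogatov--Swinnerton-Dyer); starting from an adelic point of $X$ orthogonal to $\Br(X)$, Schinzel's hypothesis allows one to find a rational point of $\mathbb{A}^{m}$ above which the fiber has points everywhere locally and satisfies the Hasse principle, arbitrarily close to the given adelic projection. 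In both cases, the applicability of the fibration theorem hinges on the Brauer-group description of Step~2. The technical heart of the argument is therefore that second step: biquadratic extensions are non-cyclic, so elements of $\Br(K/k)$ need not be cyclic algebras and the usual reduction to a single cyclic sub-extension --- ubiquitous in the literature on normic equations for cyclic $K/k$ --- is unavailable. The perfect-square hypothesis $Q^{2}$ is precisely what kills the non-cyclic obstruction and makes it possible to match vertical Brauer classes on $X$ with characters on $\mathbb{P}^{m}$; verifying this compatibility in the exact form demanded by the fibration theorems, and, for part~(2), controlling the degenerate fibers over the zero locus of $Q$ so that Schinzel's hypothesis still delivers usable geometric input, is where I expect the main difficulty to sit.
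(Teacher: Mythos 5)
Your plan is the ``direct'' fibration approach over $\mathbb{P}^{m}$, and it stalls exactly at the point you yourself flag: Step~2 is not a proof but a hope. The smooth fibers here are smooth compactifications of torsors under the norm-one torus of a \emph{biquadratic} extension, and for such fibers $\Br(\text{fiber})/\Br(k(\theta))$ is genuinely nontrivial (this is the same $\mathbb{Z}/2$ class responsible for failures of the Hasse norm principle, e.g.\ for $\mathbb{Q}(\sqrt{13},\sqrt{17})$). The fibration theorems you invoke do not apply off the shelf in this situation: the Schinzel-type theorems of Colliot-Th\'el\`ene\textendash Skorobogatov\textendash Swinnerton-Dyer and Harari work over $\mathbb{P}^{1}$ and, when the fibers carry Brauer classes not coming from the ground field, require a verified compatibility between $\Br(X_{\eta})/\Br(F)$ and classes on the total space (Salberger's device and its hypotheses); your assertion that the exponent $2$ on $Q$ ``forces the exotic non-cyclic class to be pulled back from the base'' and that the vertical Brauer group generated by quadratic cup products suffices is exactly the unproved crux, and it is precisely the difficulty singled out in \cite[Rem.~1.5]{CT-Sk-SD} and in the introduction of this paper (non-trivial Brauer groups of the fibers, on top of the degenerate fibers over $Q=0$ and the base being $\mathbb{P}^{m}$ rather than $\mathbb{P}^{1}$). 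Similarly, for part~(1) the zero-cycle fibration theorems need hypotheses on the degenerate fibers or on the specialization of the generic fiber's Brauer group that you have not checked. So as written there is a genuine gap, not a complete alternative proof.

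For contrast, the paper avoids this entirely and uses the square on the right-hand side for a different, geometric purpose: since $N_{K/k}(\mathbf{u}\mathbf{v}\mathbf{w})$ is the square of the product of the three partial norms, multiplication defines a map from the variety $V$ given by $N_{k(\sqrt{a})/k}(\mathbf{u})N_{k(\sqrt{b})/k}(\mathbf{v})N_{k(\sqrt{ab})/k}(\mathbf{w})=Q$ to $W:\,N_{K/k}(\mathbf{x})=Q^{2}$, and this map is a torsor under the kernel $S_{0}$ of the induced map of tori, which Lemma~\ref{keylemma} identifies with $\mathbb{G}_{m}^{2}$. Hilbert's Theorem~90 then makes $V_{0}$ birational to $X\times\mathbb{P}^{2}$, while $V_{0}$ is itself birationally a conic bundle over $\mathbb{P}^{m+4}$, for which statement~(1) is known (Liang) and statement~(2) is known under Schinzel (Wittenberg); a short lemma descends both statements from $X\times\mathbb{P}^{2}$ to $X$. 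If you want to salvage your route, the missing ingredient is a genuine computation of $\Br(X_{\eta})/\Br(F)$ together with a fibration theorem whose hypotheses accommodate non-vertical fiber classes; that is a substantial open-ended task, whereas the paper's reduction bypasses it.
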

Our surprisingly  short proof (in \S \ref{proofsection}) bases on a geometric observation which permits us to reduce the question to a case that can be
deduced easily from existing results. In the last section \S \ref{generalizations}, we consider naturally expected generalizations
and explain why they can not be proved in the same manner.

\section{Proof of the theorem}\label{proofsection}
After several remarks, we will give a proof of the main theorem.
\subsection{Several preliminary remarks}

\subsubsection{Birational invariance of the question}
Let $X$ and $X'$ be proper smooth and geometrically integral $k$-varieties.
Suppose that they are birationally equivalent, \textit{i.e.} they have the same function field $k(X)=k(X').$ Then they have isomorphic
Brauer groups. It follows from Lang-Nishimura's theorem that the statement ``Brauer\textendash Manin obstruction is the only obstruction to the Hasse principle
and to weak approximation for rational points'' is valid for $X'$ as long as it is valid for $X.$ Moreover the Chow group $\CH_{0}(-)$ of 0-cycles
is also a birational invariant \cite[Prop. 6.3]{CT-Coray}, whence so is the exactness of the sequence $(E).$

\subsubsection{Algebraic tori associated to the equations}\label{tori}
Let $F$ be a field of characteristic $0.$ Any biquadratic extension $E$ of $F$ can be written as the form $E=F(\sqrt{a},\sqrt{b})$ with $a,b\in F^{*}\setminus F^{*2}.$ It has three different non-trivial subfields $F(\sqrt{a}),$ $F(\sqrt{b})$ and $F(\sqrt{ab})$ each of degree $2$ over $F.$

Let $T$ be the algebraic torus defined by the exact sequence of $F$-tori induced by the norm map
$$1\To T\To R_{E/F}\mathbb{G}_{m}\buildrel{N_{E/F}}\over\To\mathbb{G}_{m}\To1,$$
where $R_{E/F}$ is the Weil restriction of scalars.
Similarly, the multiplication of norm maps defines a torus $S$ fixed into the exact sequence
$$1\To S\To R_{F(\sqrt{a})/F}\mathbb{G}_{m}\times R_{F(\sqrt{b})/F}\mathbb{G}_{m}\times R_{F(\sqrt{ab})/F}\mathbb{G}_{m}\To\mathbb{G}_{m}\To1.$$
More explicitly, the tori $T$ and $S$ are defined respectively by
$$N_{E/F}({\textbf{x}})=1$$ and by
$$N_{F(\sqrt{a})/F}({\textbf{u}})\cdot N_{F(\sqrt{b})/F}({\textbf{v}})\cdot N_{F(\sqrt{ab})/F}({\textbf{w}})=1.$$
The multiplication $(\textbf{u},\textbf{v},\textbf{w})\mapsto \textbf{u}\cdot\textbf{v}\cdot\textbf{w}$ of the field $E$  induces
the middle vertical morphism of tori in the following diagram. Note that
$N_{E/F}(\textbf{u}\cdot\textbf{v}\cdot\textbf{w})=[N_{F_{1}/F}({\textbf{u}})\cdot N_{F_{2}/F}({\textbf{v}})\cdot N_{F_{3}/F}({\textbf{w}})]^{2},$
the square on the right is commutative, the left vertical morphism $\alpha$ is then induced.
\SelectTips{eu}{12}$$\xymatrix{
1\ar[r]&S\ar[r]\ar[d]^{\alpha}&R_{F(\sqrt{a})/F}\mathbb{G}_{m}\times R_{F(\sqrt{b})/F}\mathbb{G}_{m}\times R_{F(\sqrt{ab})/F}\mathbb{G}_{m}\ar[r]^-{\lambda}\ar[d]&\mathbb{G}_{m}\ar[r]\ar[d]^{2}&1
\\ 1\ar[r]&T\ar[r]&R_{E/F}\mathbb{G}_{m}\ar[r]^{\mu}&\mathbb{G}_{m}\ar[r]&1
}$$
\begin{lem}[{\cite[Prop. 3.1(b)]{CTnonpub}}]\label{keylemma}
The morphism $\alpha$ is an epimorphism whose kernel
$S_{0}$ is isomorphic to $\mathbb{G}_{m}^{2}.$
\end{lem}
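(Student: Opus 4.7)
The plan is to produce $\ker(\alpha)$ explicitly and then deduce surjectivity of $\alpha$ from a dimension count. Throughout, write $F_{1}=F(\sqrt{a})$, $F_{2}=F(\sqrt{b})$, $F_{3}=F(\sqrt{ab})$ and let $G=\mathrm{Gal}(E/F)=\{1,\sigma_{a},\sigma_{b},\sigma_{ab}\}$, where $\sigma_{a}$ fixes $\sqrt{a}$, $\sigma_{b}$ fixes $\sqrt{b}$, and $\sigma_{ab}$ fixes $\sqrt{ab}$; observe in particular that $\sigma_{ab}$ restricts to the non-trivial element of $\mathrm{Gal}(F_{j}/F)$ for both $j=1,2$.

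First I would exhibit a morphism $\phi\colon \mathbb{G}_{m}^{2}\to S$ by $(x,y)\mapsto(x,y,(xy)^{-1})$, where $x,y,(xy)^{-1}$ are regarded as elements of $F^{*}\subset F_{i}^{*}$ for $i=1,2,3$. The defining relation $N_{F_{1}/F}(x)\cdot N_{F_{2}/F}(y)\cdot N_{F_{3}/F}((xy)^{-1})=x^{2}y^{2}(xy)^{-2}=1$ is automatic, and $\alpha\circ\phi(x,y)=xy(xy)^{-1}=1$ in $E$, so $\phi$ is a closed immersion landing in $S_{0}=\ker(\alpha)$.

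Next I would show that $\phi$ surjects onto $\ker(\alpha)$. Take a geometric point $(u,v,w)\in\ker(\alpha)$. From $uvw=1$ one has $uv=w^{-1}\in F_{3}=E^{\langle\sigma_{ab}\rangle}$, so $\sigma_{ab}(uv)=uv$; using that $\sigma_{ab}$ acts as Galois conjugation on each of $F_{1},F_{2}$ over $F$, this rewrites as $u/\bar{u}=\bar{v}/v$. The left-hand side lies in $F_{1}$ and the right-hand side in $F_{2}$, so both equal a common $\epsilon\in F_{1}\cap F_{2}=F$, and $\epsilon^{2}=N_{F_{1}/F}(u/\bar{u})=1$ forces $\epsilon=\pm 1$. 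If $\epsilon=1$ then $u,v\in F^{*}$ and $(u,v,w)$ lies in the image of $\phi$. If $\epsilon=-1$ then $u=f_{1}\sqrt{a}$ and $v=f_{2}\sqrt{b}$ for some $f_{1},f_{2}\in F^{*}$, whence $w=(f_{1}f_{2}\sqrt{ab})^{-1}$; a direct computation of the three norms then yields $N_{F_{1}/F}(u)\cdot N_{F_{2}/F}(v)\cdot N_{F_{3}/F}(w)=-1$, which contradicts $(u,v,w)\in S$.

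For surjectivity of $\alpha$, I would invoke a dimension count: $\dim S=5$, $\dim T=3$, and the previous paragraph gives $\dim\ker(\alpha)=2$, so the closed subgroup $\alpha(S)$ of the torus $T$ has the same dimension as $T$ and therefore coincides with $T$. The main subtle point is the case $\epsilon=-1$: the $S$-constraint $N_{F_{1}/F}(u)N_{F_{2}/F}(v)N_{F_{3}/F}(w)=1$ is precisely what eliminates the extraneous component of $\ker(m)$ and leaves $S_{0}\cong\mathbb{G}_{m}^{2}$ rather than $\mathbb{G}_{m}^{2}\times\mu_{2}$.
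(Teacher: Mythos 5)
Your argument is correct, but it takes a route different from the paper's primary proof. The paper works dually, on character groups: it writes $\hat{\alpha}\colon\hat{T}\to\hat{S}$ explicitly and checks that $0\to\hat{T}\to\hat{S}\to\mathbb{Z}\times\mathbb{Z}\to0$ is exact, which gives surjectivity of $\alpha$ and $\ker(\alpha)\simeq\mathbb{G}_m^2$ in one stroke; it also records an alternative proof on points, expanding $\mathbf{u}=u_1+u_2\sqrt{a}$, etc., and solving the resulting system $(\star\star)$ to force $u_2=v_2=w_2=0$. Your kernel computation is essentially a coordinate-free version of that alternative proof: the $\sigma_{ab}$-eigenspace decomposition replaces the explicit coordinates, and you isolate the real crux, namely that the norm equation defining $S$ kills the $(-1)$-eigencomponent (the extra $\mu_2$ one would get from $\mathbf{u}\mathbf{v}\mathbf{w}=1$ alone). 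What your write-up adds is a self-contained treatment of surjectivity via the dimension count $\dim S-\dim\ker(\alpha)=5-2=3=\dim T$, together with connectedness of $S$ and closedness of the image, whereas the paper's points-based proof only computes the kernel and leaves surjectivity to the character-theoretic argument. What the character-group proof buys is that it generalizes mechanically: for Galois group $\mathbb{Z}/p\mathbb{Z}\oplus\mathbb{Z}/p\mathbb{Z}$ with $p>2$ (Proposition \ref{keyprop}) the kernel acquires a component group $(\mathbb{Z}/p\mathbb{Z})^{p-2}$, which a dimension count cannot detect, and the eigenspace trick is special to $p=2$, as \S\ref{generalizations} points out. Two small points of hygiene, not gaps: at a geometric point the coordinates lie in $(F_i\otimes_F\Omega)^{*}$, so statements such as $F_1\cap F_2=F$, ``$u,v\in F^{*}$'' and ``$f_1,f_2\in F^{*}$'' must be read after base change to $\Omega$ (harmless, since forming $\sigma$-invariants commutes with $\otimes_F\Omega$); and to pass from your bijection on geometric points to an isomorphism of group schemes, invoke that $\ker(\alpha)$, being the kernel of a morphism of tori, is of multiplicative type and hence smooth in characteristic zero.
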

\begin{proof}
We give a quick proof here and we will come back to the discussion of this key lemma in \S \ref{generalizations}, more proofs are
available.

We investigate the associated homomorphism between Galois modules of character groups.

Denote by $G\simeq\mathbb{Z}/2\mathbb{Z}\oplus\mathbb{Z}/2\mathbb{Z}$ the Galois group of $E/F.$
The subgroup $G_{a}=Gal(E/F(\sqrt{a}))\simeq\mathbb{Z}/2\mathbb{Z}$ (resp. $G_{b}=Gal(E/F(\sqrt{b})),$ $G_{ab}=Gal(E/F(\sqrt{ab}))$) is generated by an element $\tau$
(resp. $\sigma,$ $\tau\sigma$). The quotient $G/G_{a}\simeq\mathbb{Z}/2\mathbb{Z}$ (resp. $G/G_{b},$ $G/G_{ab}$) is generated by the image $\sigma'$ (resp. $\tau',$ $\gamma$)  of $\sigma$ (resp. $\tau,$ $\tau$).

With the notation above, we obtain the associated diagram of character groups
\SelectTips{eu}{12}$$\xymatrix{
0&\hat{S}\ar[l]&\mathbb{Z}\times\sigma'\mathbb{Z}\times\mathbb{Z}\times\tau'\mathbb{Z}\times\mathbb{Z}\times\gamma\mathbb{Z}\ar[l]&\mathbb{Z}\ar[l]_-{\hat{\lambda}}&0\ar[l]
\\ 0&\hat{T}\ar[l]\ar[u]^{\hat{\alpha}}&\mathbb{Z}\times\tau\mathbb{Z}\times\sigma\mathbb{Z}\times\tau\sigma\mathbb{Z}\ar[l]\ar[u]&\mathbb{Z}\ar[l]_-{\hat{\mu}}\ar[u]_{2}&0\ar[l]
}$$
where the homomorphism $\hat{\lambda}$ an $\hat{\mu}$ are diagonal embeddings and the middle vertical Galois equivariant  homomorphism is given explicitly by
$$(x_{1},x_{2},x_{3},x_{4})\mapsto(y_{1},y_{2},y_{3},y_{4},y_{5},y_{6})=(x_{1}+x_{2},x_{3}+x_{4},x_{1}+x_{3},x_{2}+x_{4},x_{1}+x_{4},x_{2}+x_{3}).$$
The Galois equivariant homomorphism
$$(y_{1},y_{2},y_{3},y_{4},y_{5},y_{6})\mapsto(y_{1}+y_{2}-y_{3}-y_{4},y_{1}+y_{2}-y_{5}-y_{6})$$ maps the diagonal image of $\mathbb{Z}$ to $0,$
hence it defines a homomorphism $\hat{\beta}:\hat{S}\to\mathbb{Z}\times\mathbb{Z}.$

One checks by explicit calculation that
$$0\To\hat{T}\buildrel^{\hat{\alpha}}\over\To\hat{S}\buildrel^{\hat{\beta}}\over\To\mathbb{Z}\times\mathbb{Z}\To0$$
is exact, which means that $\alpha$ is an epimorphism with kernel $S_{0}\simeq\mathbb{G}_{m}^{2}.$
\end{proof}
On the level of rational points, the torus $S_0$ is defined by the equations
$$\left\{
\begin{array}{l}
\textbf{u}\cdot\textbf{v}\cdot\textbf{w}=1\\
N_{F(\sqrt{a})/F}(\textbf{u})\cdot N_{F(\sqrt{b})/F}(\textbf{v})\cdot N_{F(\sqrt{ab})/F}(\textbf{w})=1
\end{array} \right..\leqno (\star)$$

In the proof of Theorem \ref{mainThm}, the field $F$ will be the function field $k(t_{1},\ldots,t_{m}),$ the tori $T$ and $S$ will be isotrivial \textit{i.e.}
$a,b\in k^{*}$ and $E=F(\sqrt{a},\sqrt{b})=k(\sqrt{a},\sqrt{b})(t_{1},\ldots,t_{m}).$
By abuse of notation, we denote the $k$-torus
$N_{k(\sqrt{a},\sqrt{b})/k}({\textbf{x}})=1$ also
by $T,$ and similarly for $S.$

\subsection{Proof of Theorem \ref{mainThm}}\label{proof}

\begin{proof}
%We may write $Q(t)=P_{1}(t)/P_{2}(t)$ as a quotient of polynomials $P_{1}$ and $P_{2}.$ Note that $P_{2}(t)^{4}=N_{K/k}(P_{2}(t)),$
%the equation under consideration is equivalent to
%$$N_{K/k}({\textbf{x}}\cdot P_{2}(t))=(P_{1}(t)P_{2}(t))^{2}.$$
%Since the statement we want to prove is birationally invariant, by change of variables we may assume that $Q(t)$ is a polynomial.
We may write $K=k(\sqrt{a},\sqrt{b})$ with $a,b\in k^{*}.$
The variety $X$ that we consider is a proper smooth model of the equation $N_{K/k}({\textbf{x}})=Q(t_{1},\ldots,t_{m})^{2}.$
With the notation in \S\ref{tori}, this equation defines the fiber of $\mu$ over the point $Q(t_{1},\ldots,t_{m})^{2}$ of $\mathbb{G}_{m,k(t_{1},\ldots,t_{m})},$
we denote it by $W.$ It is a principal homogeneous space under the torus $T.$

Consider the fiber, denoted by $V,$ of $\lambda$ over the point $Q(t_{1},\ldots,t_{m})$  of $\mathbb{G}_{m,k(t_{1},\ldots,t_{m})}.$ It is defined by the equation
$$N_{k(\sqrt{a})/k}({\textbf{u}})\cdot N_{k(\sqrt{b})/k}({\textbf{v}})\cdot N_{k(\sqrt{ab})/k}({\textbf{w}})=Q(t_{1},\ldots,t_{m}),$$ it is
a principal homogeneous space under the torus $S.$

Associating $(\textbf{u},\textbf{v},\textbf{w})$ to their product $\textbf{x}=\textbf{u}\cdot\textbf{v}\cdot\textbf{w},$
we define a $k(t_{1},\ldots,t_{m})$-morphism $\phi:V\to W.$ The last morphism extends to a certain $k$-morphism
$\Phi$ between $k$-varieties
\SelectTips{eu}{12}$$\xymatrix{
V_{0}\ar[rr]^{\Phi}\ar[dr]&&W_{0}\ar[dl]
\\ &U&
}$$
fibered over a certain open subset $U$ of $\mathbb{P}^{m},$ whose generic fiber is the morphism $\phi.$
As $S\to T$ is an epimorphism of tori, torsors $V$ and $W$ become trivial over the algebraic closure, the morphism
$\phi$ is then geometrically surjective. We may assume moreover that $\Phi$ is geometrically surjective.
The morphism $\Phi:V_{0}\to W_{0}$ is a torsor under the torus $S_{0}.$
In fact, the fiber of $\Phi$ over each point $(\textbf{x},t_{1},\ldots,t_{m})$ of $W_{0}$ is defined by the equations
$$\left\{
\begin{array}{l}
\textbf{u}\cdot\textbf{v}\cdot\textbf{w}=\textbf{x}\\
N_{F(\sqrt{a})/F}(\textbf{u})\cdot N_{F(\sqrt{b})/F}(\textbf{v})\cdot N_{F(\sqrt{ab})/F}(\textbf{w})=Q(t_1,\ldots,t_m)
\end{array} \right.,$$
on which the torus $S_0$ (defined by $(\star)$ in \S\ref{tori})
acts freely transitively by multiplication at each coordinate component.
In other words, the variety $V_{0}$ defines a class in the cohomology $H^{1}(W_{0},S_0).$
Note that $S_0\simeq\mathbb{G}_{m}^{2}$ by Lemma \ref{keylemma},
thanks to Hilbert's 90 we obtain $H^{1}(k(W_{0}),S_0)=0.$ Restricted to the generic point $Spec(k(W_{0}))$ of $W_{0},$ the class $[V_{0}]$ becomes $0.$
Therefore the function field $k(V_{0})$ is a purely transcendental extension of $k(W_{0})$ of transcendental degree
$2.$ We deduce that $V_{0}$ is birationally equivalent to $W_{0}\times\mathbb{P}^{2},$ and the latter is birationally equivalent
to $X\times\mathbb{P}^{2}.$

Recall that $V_{0}\to U\subset\mathbb{P}^{m}$ has generic fiber defined by
$$N_{k(\sqrt{a})/k}({\textbf{u}})\cdot N_{k(\sqrt{b})/k}({\textbf{v}})\cdot N_{k(\sqrt{ab})/k}({\textbf{w}})=Q(t_{1},\ldots,t_{m}).$$
Birationally, the equation is the same as
$$N_{k(\sqrt{a})/k}({\textbf{u}})=\frac{Q(t_{1},\ldots,t_{m})}{N_{k(\sqrt{b})/k}({\textbf{v}})\cdot N_{k(\sqrt{ab})/k}({\textbf{w}})},$$
which can be viewed as a fibration in conics over $\mathbb{P}^{m+4}$ via the parametric variables $(t_{1},\ldots,t_{m},\textbf{v},\textbf{w}).$
For proper smooth models of $V_{0},$
the statement (1) of the theorem has been proved  in \cite[\S6]{Liang3} (with a Corrigendum); and
the statement (2) has been proved by Wittenberg in \cite[Cor. 3.5]{WittenbergLNM}; an alternative proof for the case $m=1$
is also available in a recent preprint of Wei \cite[Thm. 3.5]{Wei}.

By the birational invariance of the statements (1) and (2), the following lemma will complete the proof.
\end{proof}

\begin{lem}
Let $k$ be a number field and $X$ be a proper smooth and geometrically integral $k$-variety.
\begin{itemize}
\item[(1)] If the sequence $(E)$ is exact for $X\times\mathbb{P}^{n},$ then it is also exact for $X.$
\item[(2)] If the Brauer\textendash Manin obstruction to the Hasse principle and to weak approximation for rational points on $X\times\mathbb{P}^{n},$ then it is also the case for $X.$
\end{itemize}
\end{lem}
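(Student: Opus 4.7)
The plan is to exploit the fact that $\mathbb{P}^n$ has an obvious $k$-rational point, say $p_0=(1:0:\cdots:0)$, which yields a $k$-section $s\colon X\to X\times\mathbb{P}^n$, $x\mapsto (x,p_0)$, of the first projection $\pi\colon X\times \mathbb{P}^n\to X$. Both $\pi$ and $s$ are proper $k$-morphisms with $\pi\circ s=\mathrm{id}_X$, so they induce functorial maps on all the invariants in play: on rational points (in both directions), on the $\mathrm{CH}_0$ and $\mathrm{CH}'_0$ of $X$ and of $X\times\mathbb{P}^n$ over each $k_v$ (by push-forward, using properness), and on Brauer groups. The key algebraic input is that $\pi^*\colon \Br(X)\to\Br(X\times\mathbb{P}^n)$ is an isomorphism (projective bundle formula, $\Br(\mathbb{P}^n)=0$), with inverse $s^*$.

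For part (2), I would argue as follows. Suppose $(P_v)_{v\in\Omega}\in\prod_v X(k_v)$ is orthogonal to $\Br(X)$. Then the family $(s(P_v))_v\in\prod_v (X\times\mathbb{P}^n)(k_v)$ is orthogonal to $\Br(X\times\mathbb{P}^n)$: for any $\beta\in\Br(X\times\mathbb{P}^n)$, write $\beta=\pi^*\alpha$ with $\alpha\in\Br(X)$ and use $s^*\pi^*=\mathrm{id}$ together with the functoriality of the Brauer–Manin pairing. By hypothesis, $X\times\mathbb{P}^n$ has a rational point $R\in (X\times\mathbb{P}^n)(k)$ approximating $(s(P_v))_v$ in the appropriate set of places; then $\pi(R)\in X(k)$ approximates $(P_v)_v$. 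This gives both the Hasse principle and weak approximation for $X$.

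For part (1), I would form the commutative diagram between the sequence $(E)$ for $X$ and the sequence $(E)$ for $X\times\mathbb{P}^n$, with vertical arrows induced in one direction by $s_*$ (on $\mathrm{CH}_0$ and its variants) and the transpose of $\pi^*$ (on $\mathrm{Hom}(\Br,\Q/\Z)$), and in the other direction by $\pi_*$ and the transpose of $s^*$; functoriality of the Brauer–Manin pairing makes the squares commute, and the compositions of the two columns are the identity. A direct diagram chase then transports exactness from $X\times\mathbb{P}^n$ to $X$: given $(z_v)_v\in\prod_v\varprojlim_n\mathrm{CH}'_0(X_v)/n$ with trivial pairing against $\Br(X)$, lift $(s_*z_v)_v$ via the exact sequence for $X\times\mathbb{P}^n$ to a class $Z\in\varprojlim_n\mathrm{CH}_0(X\times\mathbb{P}^n)/n$, and take $\pi_*Z$.

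The one delicate point I expect is the compatibility of $s_*$ and $\pi_*$ with the functors $\varprojlim_n(-)/n$ and with the auxiliary modification $\mathrm{CH}_0'$ used at archimedean and bad places in the sequence $(E)$; this is a formal but careful check, using that $\pi$ and $s$ are proper morphisms defined over $k$ and hence induce maps of these augmented Chow groups compatibly with the pairing with $\Br$. Once this is granted, both parts reduce to the identity $\pi_*\circ s_*=\mathrm{id}$, which is exactly what allows a retract-style diagram chase to propagate the desired properties downward.
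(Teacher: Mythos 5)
Your proof is correct and follows essentially the same route as the paper: the section $x\mapsto(x,p)$ together with the projection $\pi$, the isomorphism $\Br(X)\simeq\Br(X\times\mathbb{P}^n)$, a retract-style diagram chase for the sequence $(E)$, and functoriality of the Brauer\textendash Manin pairing for rational points. You merely spell out the details (including the compatibility of $s_*$, $\pi_*$ with $\varprojlim_n(-)/n$ and $\CH_0'$) that the paper leaves as a terse "diagram chasing'' and "obvious fibration argument''.
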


\begin{proof}
Note that the projection $\pi:X\times\mathbb{P}^{n}\to X$ induces an isomorphism
$\Br(X)\buildrel{\simeq}\over\to \Br(X\times\mathbb{P}^{n}).$ Fix a $k$-rational point $p$  of $\mathbb{P}^{n},$ the map
$x\mapsto (x,p)$ defines a section $\sigma:X\to X\times\mathbb{P}^{n}$ of $\pi.$ Diagram chasing proves the first statement.
Functoriality of the Brauer\textendash Manin pairing and obvious fibration argument prove the second statement.
\end{proof}

\section{A remark on ``generalizations''}\label{generalizations}

In this section, we consider naturally expected generalizations
and explain why they can not be proved in the same manner.

Let $p$ be a prime number and $K/k$ be a Galois extension of Galois group $\mathbb{Z}/p\mathbb{Z}\oplus\mathbb{Z}/p\mathbb{Z}.$
One may expect that the analogue of Theorem \ref{mainThm} holds for proper smooth models of the equation
$$N_{K/k}({\textbf{x}})=Q(t_{1},\ldots,t_{m})^{p}.$$
Once an analogue of Lemma \ref{keylemma} for more general $p$ is established,
\textit{i.e.} the homomorphism $\alpha$ is an epimorphism with kernel $S_0\simeq\mathbb{G}_m^p,$
all the remaining arguments still work well.
In this section we will prove Proposition \ref{keyprop} below.
Unfortunately, for $p>2$ the kernel $S_0$ is not connected anymore.

Consider a Galois extension $E/F$ of Galois group $G=\mathbb{Z}/p\mathbb{Z}\oplus\mathbb{Z}/p\mathbb{Z},$
it has $p+1$ nontrivial sub-extensions $F_{i}(i=0,\ldots,p).$ The subgroup $G_{i}=Gal(E/F_{i})\simeq\mathbb{Z}/p\mathbb{Z}$ and
$H_{i}=G/G_{i}=Gal(F_{i}/F)\simeq\mathbb{Z}/p\mathbb{Z}.$
We can write down explicitly the Galois equivariant homomorphism induced by quotients $G\to H_i$
$$\hat{\rho}:\mathbb{Z}[G]\to\prod_{i=0}^{p}\mathbb{Z}[H_{i}].$$
It factorises through quotients by the diagonally embedded $\mathbb{Z}$ and gives the homomorphism
$$\hat{\alpha}:\hat{T}=\frac{\mathbb{Z}[G]}{\mathbb{\mathbb{Z}}}\to\hat{S}=\frac{\prod_{i=0}^{p}\mathbb{Z}[H_{i}]}{\mathbb{Z}}.$$
This last homomorphism between Galois modules is associated to a morphism $\alpha:S\to T$ between algebraic tori, where $S$ is the torus defined by
$\prod_{i=0}^pN_{F_i/F}(\textbf{u}_\textbf{i})=1$
and $T$ is the torus defined by $N_{E/F}(\textbf{x})=1.$

\begin{prop}\label{keyprop}
The morphism $\alpha$ is an epimorphism. The kernel $S_0=\ker(\alpha)$ is a group of multiplicative type, its
identity component $S_0^\circ$ is isomorphic to $\mathbb{G}_m^p$ and its group of connected component $\pi_0(S_0)$
is isomorphic to $(\Z/p\Z)^{p-2}$ as an abelian group. In particular $S_0\simeq\mathbb{G}_m^p$ for $p=2.$

\end{prop}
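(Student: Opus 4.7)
By the antiequivalence between groups of multiplicative type over $F$ and finitely generated Galois lattices, I analyze $\hat\alpha: \hat T \to \hat S$ rather than $\alpha$. Consider the commutative diagram of short exact sequences
\[ 0 \to \mathbb{Z} \xrightarrow{\hat\mu} \mathbb{Z}[G] \to \hat T \to 0 \quad \text{and} \quad 0 \to \mathbb{Z} \xrightarrow{\hat\lambda} \textstyle\prod_i \mathbb{Z}[H_i] \to \hat S \to 0, \]
connected by vertical maps $\times p$, $\hat\rho$, $\hat\alpha$ (the factor $p$ on the left dualizes the relation $N_{E/F}(\prod_i\mathbf{u}_i) = (\prod_i N_{F_i/F}(\mathbf{u}_i))^p$, extending the right square of Lemma~\ref{keylemma}). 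The map $\hat\rho$ is injective: rationally, each nontrivial character of $G$ factors through a unique $H_i$, so $\hat\rho\otimes\mathbb{Q}$ is an isomorphism on nontrivial isotypic components and a diagonal embedding on the trivial one. The snake lemma yields
\[ 0 \to \ker\hat\alpha \hookrightarrow \mathbb{Z}/p \xrightarrow{\delta} \coker\hat\rho \to \coker\hat\alpha \to 0. \]
Since $\hat T = \mathbb{Z}[G]/\mathbb{Z}\hat\mu(1)$ is torsion-free while $\mathbb{Z}/p$ is torsion, $\ker\hat\alpha = 0$; thus $\alpha$ is an epimorphism.

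A second snake lemma, applied to the augmentation exact sequences $0 \to I_G \to \mathbb{Z}[G] \to \mathbb{Z} \to 0$ and $0 \to \prod_i I_i \to \prod_i \mathbb{Z}[H_i] \to \mathbb{Z}^{p+1} \to 0$ (linked by $\hat\rho$, which induces the diagonal $\mathbb{Z}\hookrightarrow\mathbb{Z}^{p+1}$ on augmentations---\emph{not} $\times p$), yields
\[ 0 \to T_\rho \to \coker\hat\rho \to \mathbb{Z}^p \to 0, \qquad T_\rho := \coker(\hat\rho|_{I_G}). \]
Here $T_\rho$ is finite (both $I_G$ and $\prod_i I_i$ have rank $p^2-1$), and the quotient $\mathbb{Z}^p$ carries trivial $G$-action because $G$ is abelian and hence fixes each subgroup $G_i$. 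Combining with the first snake and noting $\delta(\mathbb{Z}/p) \subset T_\rho$ (being torsion), the cokernel $\coker\hat\alpha$ has free part $\mathbb{Z}^p$ with trivial Galois action and torsion part $T_\rho/\delta(\mathbb{Z}/p)$. Dualizing yields $S_0^\circ \simeq \mathbb{G}_m^p$ split and $\pi_0(S_0) \simeq T_\rho/\delta(\mathbb{Z}/p)$ as an abelian group.

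It remains to compute $T_\rho/\delta(\mathbb{Z}/p)$ and identify it with $(\mathbb{Z}/p\mathbb{Z})^{p-2}$. First, $T_\rho$ is a $p$-group: for $\ell \ne p$, the semisimple decompositions $\mathbb{Z}_\ell[G] \simeq \mathbb{Z}_\ell \times \mathbb{Z}_\ell[\zeta_p]^{p+1}$ and $\mathbb{Z}_\ell[H_i] \simeq \mathbb{Z}_\ell \times \mathbb{Z}_\ell[\zeta_p]$ identify $\hat\rho|_{I_G}\otimes\mathbb{Z}_\ell$ with the diagonal isomorphism $\mathbb{Z}_\ell[\zeta_p]^{p+1} \to \mathbb{Z}_\ell[\zeta_p]^{p+1}$. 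At $\ell = p$, using $I_i\otimes\mathbb{Z}_p \simeq \mathbb{Z}_p[\zeta_p]$ and the combinatorial description of $\hat\rho|_{I_G}$ derived from the partition $G\setminus\{1\} = \bigsqcup_i (G_i\setminus\{1\})$ together with the natural bijections $G_i^* \to H_j^*$ for $j\neq i$, one computes the Smith normal form of the resulting $(p^2-1)\times(p^2-1)$ integer matrix. Identifying $\delta(\mathbb{Z}/p) = \mathbb{Z}\cdot[\hat\lambda(1)]$ as a specific subgroup then yields the claimed quotient. This integral computation at $p$ is the main obstacle: for $p=2$ it reduces to Lemma~\ref{keylemma}, but for $p\geq 3$ the matrix does not admit an obvious structural simplification---consistent with the paper's remark that extending the main theorem beyond biquadratic extensions remains out of reach, since the non-connectedness of $S_0$ prevents the reduction via Hilbert~90 used in the proof of the main theorem.
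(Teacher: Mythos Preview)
Your framework---passing to character lattices, applying the snake lemma to the diagram you call $(\sharp 1)$, and decomposing $\coker\hat\alpha$ into a free quotient of rank $p$ with trivial Galois action and a finite torsion subgroup---is essentially the paper's own strategy. Your proof of the injectivity of $\hat\rho$ via the rational character decomposition of $\Q[G]$ is in fact tidier than the paper's dimension count, and your second snake lemma on augmentation ideals correctly isolates the free part $\Z^p$ and shows that the torsion part $T_\rho$ is a finite $p$-group.

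But you have not proved the proposition. The assertion $\pi_0(S_0)\simeq(\Z/p\Z)^{p-2}$ is exactly the content of the step you label ``the main obstacle'' and then leave undone: reducing to a Smith normal form of a $(p^2-1)\times(p^2-1)$ matrix over $\Z_p$ is not a proof unless the form is actually exhibited, and you yourself concede that for $p\geq 3$ the matrix ``does not admit an obvious structural simplification.'' The paper does not leave this open. It replaces the augmentation ideals by the quotients $\Z[z_i]/(v(z_i))$, builds an auxiliary diagram by repeated use of the elementary Chinese-remainder-type lemma $R/IJ\to R/I\times R/J\to R/(I+J)\to 0$, and explicitly tracks the image of $\delta(1)$ inside the resulting cokernel. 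This produces a \emph{split} short exact sequence
\[
0\longrightarrow\mathbb{F}_p[x]/(v(x))\longrightarrow\frac{\coker(\prod g_i\times j)}{\Z/p\Z}\longrightarrow\mathbb{F}_p[y]/(v(y))\times\Z/p\Z\longrightarrow 0,
\]
from which one reads off that the torsion in $\coker\hat\alpha$ is an $\mathbb{F}_p$-vector space of order $p^{2p-1}/p^{p+1}=p^{p-2}$. Without an argument of this kind---or an honest Smith computation---your proposal establishes only $S_0^\circ\simeq\mathbb{G}_m^p$ and that $\pi_0(S_0)$ is some finite $p$-group, not its precise structure.
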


The proof by investigating exact sequences of Galois modules
is not difficult but rather long, we outline the main steps and leave some detailed verification to the readers.
We will apply repeatedly the following obvious lemma.

\begin{lem}\label{obviouslemma}
Let $R$ be a commutative ring and let $I$ and $J$ be its ideals. Then $r\mapsto (r,r)$ and $(r_1,r_2)\mapsto r_1-r_2$ give
rise to an exact sequence of $R$-modules
$$R/IJ\To R/I\times R/J\To R/(I+J)\To0.$$
\end{lem}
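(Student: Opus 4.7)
The plan is to verify three things about the sequence
$$R/IJ\xrightarrow{\,\varphi\,}R/I\times R/J\xrightarrow{\,\psi\,}R/(I+J)\To 0,$$
with $\varphi(\bar r)=(\bar r,\bar r)$ and $\psi(\bar r_1,\bar r_2)=\overline{r_1-r_2}$: that both maps are well-defined $R$-module homomorphisms, that $\psi$ is surjective, and that $\ker\psi=\mathrm{im}\,\varphi$. Note that the statement claims no injectivity for $\varphi$, which is appropriate since $\ker\varphi=(I\cap J)/IJ$ can be nonzero (e.g.\ $R=\Z$, $I=J=(2)$), so I will not attempt to prove a zero on the left.

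First I would check well-definedness: $\varphi$ descends to $R/IJ$ because $IJ\subseteq I\cap J$, and $\psi$ descends from $R\times R$ to $R/I\times R/J$ because the subgroup $(I\times 0)+(0\times J)$ maps into $I+J$. Surjectivity of $\psi$ is immediate: the class of any $r\in R$ in $R/(I+J)$ is hit by $(r,0)$.

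The only step with any content is exactness at $R/I\times R/J$. The composition $\psi\circ\varphi$ sends $\bar r$ to $\overline{r-r}=0$, giving the easy inclusion $\mathrm{im}\,\varphi\subseteq\ker\psi$. For the reverse, suppose $(\bar r_1,\bar r_2)\in\ker\psi$, so that $r_1-r_2=i+j$ for some $i\in I$ and $j\in J$; setting $r:=r_1-i=r_2+j$ gives $r\equiv r_1\pmod I$ and $r\equiv r_2\pmod J$, whence $\varphi(\bar r)=(\bar r_1,\bar r_2)$. There is no substantive obstacle here --- the argument is purely formal, consistent with the paper's description of the lemma as ``obvious'', and I expect the proof to occupy no more than a few lines.
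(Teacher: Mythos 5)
Your proof is correct and complete; the paper itself gives no argument for this lemma, simply declaring it obvious, and your verification (well-definedness, surjectivity, and the Chinese-remainder-style check that $\ker\psi=\mathrm{im}\,\varphi$) is exactly the standard argument one would supply. Your remark that injectivity on the left is not claimed, since $\ker\varphi=(I\cap J)/IJ$ may be nonzero, is also accurate and consistent with how the lemma is stated and used.
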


\begin{proof}[Proof of Proposition \ref{keyprop}]
We begin with a commutative diagram with exact rows
$$\xymatrix{
0\ar[r]&\mathbb{Z}\ar[r]\ar[d]^{p}&\mathbb{Z}[G]\ar[r]\ar[d]^{\hat{\rho}}&
\hat{T}\ar[r]\ar[d]^{\hat{\alpha}}&0
\\ 0\ar[r]&\mathbb{Z}\ar[r]&\prod_{i=0}^p\mathbb{Z}[H_i]\ar[r]&\hat{S}\ar[r]&0
}.\leqno{(\sharp 1)}$$
Note that $\hat{T}$ is a free abelian group, the snake lemma implies that the injectivity of $\hat{\alpha}$
is equivalent to that of $\hat{\rho}.$ It suffices to prove the injectivity and study the cokernel of $\hat{\alpha}$
via the induced exact sequence
$$0\To\Z/p\Z\To \coker(\hat{\rho})\To \coker(\hat{\alpha})\To0.\leqno{(\sharp 2)}$$

To fix notation, we choose generators $x$ and $y$ of $G=\Z/p\Z\oplus\Z/p\Z.$
With multiplicative notation, the group ring $\Z[G]$ is isomorphic to the quotient of the polynomial ring $\Z[x,y]/(x^p-1,y^p-1)$
equipped with obvious Galois action via multiplication.
Similarly, we have $\Z[H_i]\simeq\Z[z_i]/(z_i^p-1)$ and $\Z[G]\to\Z[H_i]$ is given by
$x\mapsto z_i,y\mapsto z_i^i$ (resp. $x\mapsto 1,y\mapsto z_0$) if  $i\neq0$ (resp. $i=0$).
We denote $1+t+\cdots+t^{p-1}$ simply by $v(t)$ for $t=x,y,$ or $z_i.$ Then we have the following
natural homomorphisms\\
\noindent$h_i:\Z[H_i]\simeq\Z[z_i]/(z_i^p-1)\to\Z[z_i]/(v(z_i))\times\Z[z_i]/(z_i-1)=\Z[z_i]/(v(z_i))\times\Z,$\\
\noindent$f_i:\Z[G]\to\Z[H_i]\buildrel{h_i}\over\to\Z[z_i]/(v(z_i))\times\Z,$\\
\noindent$g_i=pr_i^1\circ f_i:\Z[G]\to\Z[z_i]/(v(z_i)),$\\
\noindent$j=pr_i^2\circ f_i:\Z[G]\to \Z.$\\
We remark that this last homomorphism $j$ does not depend on $i.$

\medskip
\noindent\textit{Injectivity of $\hat{\rho}.$}

As $\Z[G]$ is torsion free, it suffices to show that $\hat{\rho}\otimes\Q$ is injective. It is clear that
$h_{i\Q}$ maps $\Q[H_i]$ isomorphically to a product of two fields $\Q[z_i]/(v(z_i))\times\Q.$
By Maschke's theorem and Artin\textendash Wedderburn theorem, the group algebra $\Q[G]$  of an abelian group
is also isomorphic to a product of finite extensions of $\Q,$ hence $Spec(\Q[G])$ is a finite disjoint union of its reduced closed points.

Consider reduced closed subschemes of $Spec(\Q[G])$ given by surjective projections $f_{i\Q}:\Q[G]\twoheadrightarrow\Q[z_i]/(v(z_i))\times\Q,$
all of them contain a common reduced closed point given by $j_{\Q}:\Q[G]\twoheadrightarrow\Q.$
Once we can show that $g_{i\Q}:\Q[G]\twoheadrightarrow\Q[z_i]/(v(z_i))$ have distinct kernels for different $i,$
then $Spec(\Q[z_i]/(v(z_i)))$ are distinct closed points of $Spec(\Q[G]).$
By calculation of dimension of $\Q$-vector spaces, we can conclure that $\Q[G]$ is isomorphic to $\prod_{i=0}^p\Q[z_i]/(v(z_i))\times\Q.$

Note that $\ker(f_{i\Q})\subset \ker(g_{i\Q})$ and $dim_{\Q}\ker(g_{i\Q})=dim_{\Q}\ker(f_{i\Q})+1=p(p-1)+1,$
in order to prove that $\ker(g_{i\Q})\neq \ker(g_{i'\Q})$ for $i\neq i'$ it suffices to show that
$dim_{\Q}(\ker(f_{i\Q})\cap \ker(f_{i'\Q}))\leqslant p(p-1)-2.$ By the symmetricity of subgroups of $G$ we may assume that $i=0$ and
$i'=p,$ simple calculation shows that the inequality is valid if $p>2.$
For the case $p=2,$ we can check directly without dimensional argument that $\ker(g_{i\Q})\neq \ker(g_{i'\Q})$ for $i\neq i'.$

We have proved the injectivity of $\hat{\rho}$ and $\hat{\alpha}.$

\smallskip
\noindent\textit{Calculation of $\coker(\hat{\alpha}).$}

Consider the following commutative diagram with exact rows
$$\xymatrix@C=14pt{
0\ar[r]&\mathbb{Z}[G]\ar[r]^-{\hat{\rho}}\ar[d]^{\prod g_i\times j}&\prod_{i=0}^p\mathbb{Z}[H_i]\ar[r]\ar[d]^{\prod h_i} &\coker(\hat{\rho})\ar[r]\ar[d]^{\varphi}&0\\
0\ar[r]&\prod_{i=0}^p\mathbb{Z}[z_i]/(v(z_i))\times \mathbb{Z}\ar[r]&\prod_{i=0}^p\mathbb{Z}[z_i]/(v(z_i))\times \prod_{i=0}^p\mathbb{Z}\ar[r]& \mathbb{Z}^{p+1}/\Z\ar[r]&0
},\leqno{(\sharp 3)}$$
where $\Z^{p+1}/\Z$ is the cokernel of the diagonal embedding.
The induced homomorphism $\varphi$ is surjective since $\Z[H_i]\to\Z$ is surjective for every $i.$ We have proved that
$(\prod g_i\times j)_\Q$ and $(\prod h_i)_\Q$ are isomorphisms, then $\prod g_i\times j$ and $\prod h_i$
are both injective and $\ker(\varphi)$ is a torsion group.
Lemma \ref{obviouslemma} shows that $\coker(\prod h_i)=(\Z/p\Z)^{p+1}.$
We obtain an exact sequence
$$0\To \ker(\varphi)\To \coker(\prod g_i\times j)\To (\Z/p\Z)^{p+1}\To0$$
On the other hand, in the sequence $(\sharp 2)$ the image of $\Z/p\Z$ must be contained in the torsion part $\coker(\hat{\rho})_{tors}=\ker(\varphi).$
We denote by $\hat{\pi}_0=\ker(\varphi)/(\Z/p\Z),$ then we obtain exact sequences
$$0\To \hat{\pi}_0\To \frac{\coker(\prod g_i\times j)}{\Z/p\Z}\To (\Z/p\Z)^{p+1}\To0\leqno{(\sharp 4)}$$ and
$$0\To \hat{\pi}_0\To \coker(\hat{\alpha})\To\Z^{p+1}/\Z\To0.$$
Since $\Z^{p+1}/\Z$ is isomorphic to $\Z^p$ as a Galois module, in order to conclure it suffices to show that
$\hat{\pi}_0$ is isomorphic to $(\Z/p\Z)^{p-2}$ as an abelian group.

The rest of the proof is devoted to the calculation of $\hat{\pi}_0.$ Lemma \ref{obviouslemma} gives the following two exact
sequences (who become the middle column of the next diagram)
$$0\to \mathbb{Z}[x,y]/(x^p-1,y-1)\to \mathbb{Z}[x,y]/(v(x),y-1)\times \mathbb{Z}[x,y]/(x-1,y-1)\to \mathbb{Z}/p\Z\to 0,$$
$$0\to \mathbb{Z}[x,y]/(x^p-1,v(y))\to \mathbb{Z}[x,y]/(v(x),v(y))\times \mathbb{Z}[x,y]/(x-1,v(y))\to \mathbb{F}_p[y]/(v(y))\to 0,$$
where the injectivity on the left is verified by tensoring with $\Q$.
Then we obtain the following commutative diagram by applying the snake lemma to the first two columns.
The exactness of the upper row is deduced again by Lemma \ref{obviouslemma}. The first homomorphism in the middle row is nothing but
$\prod g_i\times j$ seen by identifying $x,y$ with their images in $\Z[z_i]/(v(z_i))$ for each $i.$
$$\xymatrix@C=10pt{
&0\ar[d]&0\ar[d]&\\
\mathbb{Z}[G]\ar[r]\ar[d]^=&\mathbb{Z}[x,y]/(x^p-1,v(y))\times\mathbb{Z}[x,y]/(x^p-1,y-1)\ar[r]\ar[d]^\psi&\mathbb{F}_p[x]/(x^p-1)\ar[r]\ar[d]^{\bar{\psi}}&0\\
\mathbb{Z}[G]\ar[r]&\mathbb{Z}[x,y]/(v(x),v(y))\times \mathbb{Z}[y]/(v(y))\times \mathbb{Z}[x]/(v(x))\times \mathbb{Z}\ar[r]\ar[d]^\eta&\coker(\prod g_i\times j)\ar[r]\ar[d]^{\bar{\eta}}&0\\
&\mathbb{F}_p[y]/(v(y))\times \mathbb{Z}/p\Z\ar[d]\ar[r]^=&\mathbb{F}_p[y]/(v(y))\times \mathbb{Z}/p\Z\ar[d]&\\ &0&0&
}.\leqno{(\sharp 5)}$$

We are going to find out the image of $1\in \Z/p\Z$ in $\ker(\varphi)\subset \coker(\prod g_i\times j).$
According to the diagram $(\sharp 1),$ its image in $\coker(\hat{\rho})$ lifts to the element of $\prod_{i=0}^p\Z[H_i]$
represented by $(v(z_i))_i\in\prod_{i=0}^p\Z[z_i]/(z_i^p-1)=\prod_{i=0}^p\Z[H_i].$
Applying $\prod h_i$ we get $(0,\ldots,0,p,\ldots,p)\in\prod_{i=0}^p\mathbb{Z}[z_i]/(v(z_i))\times \prod_{i=0}^p\mathbb{Z},$
which comes from $(0,\ldots,0,p)\in\prod_{i=0}^p\mathbb{Z}[z_i]/(v(z_i))\times\mathbb{Z}.$
This last element is identified (in $\sharp 5$) with $(0,0,0,p)\in\mathbb{Z}[x,y]/(v(x),v(y))\times \mathbb{Z}[y]/(v(y))\times \mathbb{Z}[x]/(v(x))\times \mathbb{Z},$
which is exactly the image of $(0,v(x))$ under $\psi.$ Therefore the image of $1\in\Z/p\Z$ in $\coker(\prod g_i\times j)$
equals $-v(x)\in\mathbb{F}_p[x]/(x^p-1)\subset \coker(\prod g_i\times j)$ and hence
$$0\To\mathbb{F}_p[x]/(v(x))\buildrel{\bar{\bar{\psi}}}\over\To\frac{\coker(\prod g_i\times j)}{\Z/p\Z}\buildrel{\bar{\eta}}\over\To\mathbb{F}_p[y]/(v(y))\times \mathbb{Z}/p\Z\To0$$
is exact.

We are going to show that this exact sequence is split.
Consider the element $x\in\mathbb{F}_p[x]/(v(x))$ and its image $\bar{\bar{\psi}}(x).$ The element $x$ lifts to $(0,-x)$
in the middle column of the diagram $(\sharp 5),$ then $\bar{\bar{\psi}}(x)=\bar{\psi}(x)$ lifts to $\psi(0,-x)=(0,0,-x,-x)=(0,0,-x,-1).$
On the other hand, we have another diagram $(\sharp 5')$ switching $x$ and $y$ with mappings denoted by $\psi',$ $\eta',$ $\bar{\psi}',$ and $\bar{\eta}'.$
In the middle row of $(\sharp 5'),$ we have $(0,-x,0,-1)\mapsto\bar{\psi}(x)\in \coker(\prod g_i\times j),$
hence $\bar{\eta}'(\bar{\psi}(x))=\eta'(0,-x,0,-1)=(x,1)\in\mathbb{F}_p[x]/(v(x))\times \mathbb{Z}/p\Z.$
In other words $pr^1(\bar{\eta}'(\bar{\bar{\psi}}(x)))=x\in\mathbb{F}_p[x]/(v(x))$ where $pr^1$ is the projection to the first component.
Notice that all the homomorphisms are Galois equivariant and the Galois action on $\mathbb{F}_p[x]/(v(x))$ is given by multiplication,
therefore $pr^1\circ\bar{\eta}'$ is a splitting of $\bar{\bar{\psi}}$ and

$$\frac{\coker(\prod g_i\times j)}{\Z/p\Z}\simeq \mathbb{F}_p[x]/(v(x))\times\mathbb{F}_p[y]/(v(y))\times \mathbb{Z}/p\Z.$$
Then the sequence $(\sharp 4)$ implies that $\hat{\pi}_0$ is isomorphic to $(\Z/p\Z)^{p-2}$ as an abelian group. This completes the proof.
\end{proof}

In the end, we would like to give an alternative proof of Lemma \ref{keylemma}.
Let us follow the tracks of the proof which may explain to some extent why we have the difference on connectedness
between the cases $p=2$ and $p>2:$
the equivalences $(\star)\Leftrightarrow(\star\star)\Leftrightarrow(\star\star\star)$ can be easily established in the case $p=2,$ but
difficulties appear for $p>2.$

\begin{proof}[An alternative proof of Lemma \ref{keylemma}]
On the level of rational points, for any extension $L$ of $F$ we need to show that $S_{0}(L)$ is (functorially) isomorphic to $\mathbb{G}_{m}^{2}(L)$ as abelian groups.

By definition $S_{0}(L)$ is given by the triples
$$(\textbf{u},\textbf{v},\textbf{w})\in F(\sqrt{a})\otimes_{F}L\times F(\sqrt{b})\otimes_{F}L\times F(\sqrt{ab})\otimes_{F}L$$
satisfying
$$\left\{
\begin{array}{l}
\textbf{u}\cdot\textbf{v}\cdot\textbf{w}=1\\
N_{F(\sqrt{a})/F}(\textbf{u})\cdot N_{F(\sqrt{b})/F}(\textbf{v})\cdot N_{F(\sqrt{ab})/F}(\textbf{w})=1
\end{array} \right..\leqno (\star)$$
Fix a $L$-linear base of $F(\sqrt{a})\otimes_{F}L$ (resp. $F(\sqrt{b})\otimes_{F}L,$ $F(\sqrt{ab})\otimes_{F}L$), we write
$\textbf{u}=u_{1}+u_{2}\sqrt{a}$ (resp. $\textbf{v}=v_{1}+v_{2}\sqrt{b},$ $\textbf{w}=w_{1}+w_{2}\sqrt{ab}$) with
$u_{1}, u_{2},v_{1},v_{2},w_{1},w_{2}\in L.$
Easy calculation shows that $(\star)$ is equivalent to
$$\left\{
\begin{array}{l}
1=u_{1}v_{1}w_{1}\\
0=u_{2}v_{2}w_{2}\\
0=u_{1}v_{2}w_{2}b+u_{2}v_{1}w_{1}\\
0=u_{1}v_{2}w_{1}+u_{2}v_{1}w_{2}a\\
0=u_{1}v_{1}w_{2}+u_{2}v_{2}w_{1}
\end{array} \right..\leqno(\star\star)$$
Whence one of $u_{2},v_{2},w_{2}$ must be $0,$ and no matter which one equals $0$ the last three equalities imply that the other two are also $0.$
Then $(\star\star)$ is equivalent to
$$\left\{
\begin{array}{l}
1=u_{1}v_{1}w_{1}\\
0=u_{2}=v_{2}=w_{2}\\
\end{array} \right.,\leqno(\star\star\star)$$
which by definition is exactly $\mathbb{G}_{m}^{2}(L).$
\end{proof}

%%\bigskip
%%\noindent\textit{\textbf{Acknowledgements.}}

\bibliographystyle{alpha}
\bibliography{mybib1}

\begin{thebibliography}{CTSSD98}

\bibitem[BHB12]{B-HB}
T.~D. Browning and D.~R. Heath-Brown.
\newblock Quadratic polynomials represented by norm forms.
\newblock {\em Geometric and Functional Analysis}, 22:1124--1190, 2012.

\bibitem[CT]{CTnonpub}
J.-L. Colliot-Th\'el\`ene.
\newblock Groupe de {B}rauer non ramifi\'e d'espaces homog\`enes de tores.
\newblock To appear in Proc. Amer. Math. Soc., available at arXiv:1201.1815.

\bibitem[CT95]{CT95}
J.-L. Colliot-Th\'el\`ene.
\newblock L'arithm\'etique du groupe de {C}how des z\'ero-cycles.
\newblock {\em J. Th\'eorie de nombres de Bordeaux}, 7:51--73, 1995.

\bibitem[CTC79]{CT-Coray}
J.-L. Colliot-Th\'el\`ene and D.~Coray.
\newblock L'\'equivalence rationnelle sur les points ferm\'es des surfaces
  rationnelles fibr\'ees en coniques.
\newblock {\em Compositio Math.}, 39:301--332, 1979.

\bibitem[CTS77]{CTSansuc77-3}
J.-L. Colliot-Th\'el\`ene and J.-J. Sansuc.
\newblock La descente sur une vari\'et\'e rationnelle d\'efinie sur un corps de
  nombres.
\newblock {\em C.R.A.S. Paris}, 284:1215--1218, 1977.

\bibitem[CTS81]{CTSansuc81}
J.-L. Colliot-Th\'el\`ene and J.-J. Sansuc.
\newblock On the {C}how groups of certain rational surfaces : a sequel to a
  paper of {S}.{B}loch.
\newblock {\em Duke Math. J.}, 48:421--447, 1981.

\bibitem[CTSSD98]{CT-Sk-SD}
J.-L. Colliot-Th\'el\`ene, A.N. Skorobogatov, and Sir~Peter Swinnerton-Dyer.
\newblock Rational points and zero-cycles on fibred varieties : {S}chinzel's
  hypothesis and {S}alberger's device.
\newblock {\em J. reine angew. Math.}, 495:1--28, 1998.

\bibitem[KS86]{KatoSaito86}
K.~Kato and S.~Saito.
\newblock Global class field theory of arithmetic schemes.
\newblock {\em Contemporary Math.}, 55:255--331, 1986.

\bibitem[Lia]{Liang3}
Y.~Liang.
\newblock Astuce de {S}alberger et z\'ero-cycles sur certaines fibrations.
\newblock To appear in International Mathematics Research Notices, available
  online:\\ http://imrn.oxfordjournals.org/content/early/2012/02/01/imrn.rns003
  Corrigendum available at \\ http://www.math.jussieu.fr/\textasciitilde
  liangy/files/recherche.htm.

\bibitem[Pey]{Peyre}
E.~Peyre.
\newblock Obstructions au principe de {H}asse et \`a l'approximation faible.
\newblock S\'eminaire Bourbaki Vol. 2003/2004.

\bibitem[Wei]{Wei}
D.~Wei.
\newblock On the equation ${N}_{K/k}({\Xi})={P}(t)$.
\newblock Preprint, available at arXiv:1202.4115.

\bibitem[Wit07]{WittenbergLNM}
O.~Wittenberg.
\newblock {\em Intersections de deux quadriques et pinceaux de courbes de genre
  1}, volume 1901 of {\em Lecture Notes in Mathematics}.
\newblock Springer, 2007.

\bibitem[Wit12]{Wittenberg}
O.~Wittenberg.
\newblock Z\'ero-cycles sur les fibrations au-dessus d'une courbe de genre
  quelconque.
\newblock {\em J. Duke Math.}, 161:2113--2166, 2012.

\end{thebibliography}
\end{document}